\documentclass[10pt,twoside]{amsart}

\usepackage{amsmath,amssymb
}
\usepackage{amsfonts}
\usepackage{amsthm,amscd}
\usepackage{amsmath}
\usepackage{amssymb}
\usepackage{amstext}
\usepackage{color}
\usepackage{latexsym}
\usepackage{amscd,graphics}

\newcommand{\BB}{\mathcal{B}}
\newcommand{\CC}{\mathcal{C}}

\newcommand{\EE}{\mathcal{E}}

\newcommand{\LL}{\mathcal{L}}

\newcommand{\MM}{\mathcal{M}}
\newcommand{\NN}{\mathcal{N}}
\newcommand{\OO}{\mathcal{O}}

\newcommand{\RR}{\mathcal{R}}

\newcommand{\spe}{{\rm sp}}

\newcommand{\tr}{{\rm tr}\, }
\newcommand{\Det}{{\rm Det}\, }
\newcommand{\Fix}{{\rm Fix}\, }

\newcommand{\spec}{{\rm spec}}

\newcommand{\Erg}{{\rm Erg}}

 \DeclareMathOperator{\Id}{Id}

\newcommand{\real}{\mathbb{R}}
\newcommand{\complex}{\mathbb{C}}
\newcommand{\integer}{\mathbb{Z}}
\newcommand{\TTT}{\mathbb{T}}

\newcommand{\comments}[1]{} 

\newtheorem{proposition}{Proposition}[section]

\newtheorem{theorem}[proposition]{Theorem}

\newtheorem{corollary}[proposition]{Corollary}

\theoremstyle{remark}
\newtheorem{remark}[proposition]{Remark}
\theoremstyle{definition}

\numberwithin{equation}{section}

\hfuzz=5pt
\vfuzz=5pt

\begin{document}

\title[Absence of Giulietti--Liverani resonances]{There are no deviations for the ergodic averages of the Giulietti--Liverani horocycle flows
on the two-torus}
\author{Viviane Baladi
}
\address{Laboratoire de Probabilit\'es, Statistique et Mod\'elisation (LPSM),  
CNRS, Sorbonne Universit\'e, Universit\'e de Paris,
4, Place Jussieu, 75005 Paris, France}
\email{baladi@lpsm.paris}

\date{\today}
\begin{abstract}
We show that the ergodic integrals for the  horocycle flow on the two-torus associated 
by Giulietti and Liverani to an Anosov diffeomorphism either grow linearly or are bounded, in other
words there are no
deviations. For this, we use topological invariance of the Artin--Mazur zeta function
to exclude  resonances outside of the open unit disc.  Transfer operators acting on suitable spaces of anisotropic distributions
and their Ruelle determinants are the key tools in the proof. As a bonus, we show that for any $C^\infty$ Anosov diffeomorphism $F$ on the two-torus, the
correlations for the measure of maximal entropy and $C^\infty$ observables
decay with a rate strictly smaller than $e^{-h_{top}(F)}$. We compare our results with
very recent related work of Forni.
\end{abstract}
\thanks{Most of this work was done while preparing and delivering
a minicourse on  Anisotropic spaces and applications to hyperbolic and parabolic dynamical systems (Oberwolfach, June 2019). VB is very grateful to MFO for offering this possibility.
	Thanks to Romain Dujardin, Giovanni Forni, S\'ebastien Gou\"ezel,  and 
	Carlangelo Liverani for useful discussions and to Paolo Giulietti for pointing out  typos. 
	Thanks also to the anonymous referee and the editor for
	comments which helped improve the presentation.
VB's research is supported
by the European Research Council (ERC) under the European Union's Horizon 2020 research and innovation programme (grant agreement No 787304).}
\maketitle

\section{Introduction}

\subsection{The results  of Giulietti--Liverani}\label{sec0}

In a pioneering work, Giulietti and Liverani  \cite{GL} introduced a ``horocycle flow''
on the torus, renormalizable by a given Anosov diffeomorphism:
Fix $r>1$ and let $F: \TTT^2 \to \TTT^2$ be a $C^r$ Anosov diffeomorphism on the two-torus. By Franks--Newhouse the stable bundle $E^s$ of $F$
(see \cite[p. 805]{Hiraide}) is orientable. Fixing an orientation $E^s_+$
of $E^s$, Giulietti and Liverani assume that $DF$
preserves this orientation and they introduce the  flow $h^t=h_s^t$
on $\TTT^2$ obtained by solving $\partial_t h^t(f) = X (f)$, where $X(x)=v^s_{1,+}(x)$ is the unique vector
of $E^s_+(x)$ of unit  norm. We call $h^t$ the {\it (unit\footnote{Giulietti and Liverani actually consider more general $C^r$ time parametrizations
		but this is immaterial for the purposes of this work, as the resonances defined
		below are invariant under such time-changes since the transfer operator is modified
		by a $C^r$ coboundary \cite[(2.5)]{GL}.} speed)  Giulietti--Liverani 
(stable horocycle) flow (of $F$).}  (See Appendix~\ref{appA} for basic facts about such flows.)

For any continuous function $f:\TTT^2 \to \complex$, any $T>0$,
and any $x \in \TTT^2$, define the horocycle
integral
$
H_{x,T}(f)=\int_0^T f(h^t(x)) \, d t 
$.

By unique ergodicity, we have for any such $x$ and $f$
\begin{equation}\label{ergg}
\lim_{T\to \infty} \frac{H_{x,T}(f)}{T }
=\mu^s(f):= \int_{\TTT^2}  f(x) \, d \mu^s\, ,
\end{equation}
where $\mu^s$ is the unique invariant probability measure of the
flow $h^t$.

 Let
$h_{top}=h_{top}(F)>0$ be the topological entropy of
$F$. Giulietti and Liverani \cite{GL} find $r_0>1$ such that,
if $r\ge r_0$ there exists
a bounded linear operator $\widetilde \LL$  associated
with $F$
(see \eqref{deftildeL} below) acting on a Banach space $\widetilde \BB_{GL}$
 of distributions in a subset of the unit
tangent bundle of $\TTT^2$ such that the following holds:
The spectral radius of $\widetilde \LL$ on $\widetilde \BB_{GL}$ is $e^{h_{top}}$
and the essential
spectral radius of $\widetilde \LL$ is strictly smaller than $\tilde \rho_{GL}=\tilde \rho_{GL}(r)<1$.
In addition,   $\tilde \rho_0:=e^{h_{top}}$ is a simple eigenvalue and the only maximal eigenvalue. 
Restricting for simplicity to the case where $\widetilde \LL$ does not
have any eigenvalue of modulus one and, in addition, all of its
eigenvalues 
 of modulus larger than $\tilde \rho_{GL}$ have trivial Jordan blocks
(otherwise, additional factors $\log T^q$ with $q\ge 1$ can appear in the expansion
\eqref{GLexp} below),
we let  $\{\tilde \rho_j\}_{j=1}^{N_{GL}}$ be the $N_{GL}\ge 0$
eigenvalues of $\widetilde \LL$ of modulus 
$\tilde \rho_{GL}\le |\tilde \rho_j|<e^{h_{top}}$ (ordered by decreasing modulus
and repeated with multiplicity), 
and we let $\OO_j\in \widetilde \BB_{GL}^*$ be the associated eigenvectors
of the dual operator $\widetilde \LL^*$. We call $\{\tilde \rho_j\}_{j=0}^{N_{GL}}$ the {\it resonances}\footnote{We shall
see after \eqref{deftilderho} below that they do not depend on the Banach space.}
of $\widetilde \LL$. We call the $N_{1,GL}\le N_{GL}$ resonances
such that  $1\le |\tilde \rho_j |<e^{h_{top}}$ its {\it deviation resonances.}
By \cite{GL2} (see also \cite[Lemma 2.11]{GL}),  the maximal eigenvector
$\widetilde \LL^* \tilde \mu^s=e^{h_{top}} \tilde \mu^s$ satisfies $\pi^*(\tilde \mu^s) =\mu^s$
(this fixes a normalisation of $\tilde \mu^s$), where $\pi$ denotes the projection from the unit tangent bundle of $\TTT^2$ to $\TTT^2$.

The first main result
of Giulietti--Liverani  \cite[Theorem 2.8]{GL}  
gives  the following
expansion for the horocycle integral $H_{x,T}(f)$: For any 
$\delta \in (0, -\log \tilde \rho_{GL})$, there exists
 a constant $C<\infty$   such that,
for any $T>0$,
and any $x \in \TTT^2$ there exist real numbers $\{C_j(x,T)\}_{j=0}^{N_{GL}}$
 with 
$\sup_{x,T,j} |C_j(x,T)| \le C$,
such that for any $C^r$ function $f:\TTT^2 \to \complex$, we have\footnote{Giulietti--Liverani \cite{GL} limit the expansion to $|\tilde \rho_j| \ge 1$, i.e. $\theta_j \ge 0$, bounding the error term by
$C \|f\|_{C^{r}}$. They write $\ell_j(x,T,f)$
instead of $C_j(x,T)\cdot \OO_j (f\circ \pi)$. Their proof
gives the slightly more precise result \eqref{GLexp}.}
\begin{equation}\label{GLexp}
H_{x,T}(f)=T \cdot C_0(x,T) \cdot \mu^s(f) +
\sum_{j=1}^{N_{GL}} T^{\theta_j} \cdot C_j(x,T) 
\cdot \OO_j(f\circ \pi)
+ \RR_{x,T}(f)\, , 
\end{equation}
where 
$
\theta_j = \frac{ \log |\tilde \rho_j|}{h_{top}} \in (-\infty , 1) 
$
and, for any $T>0$,
$$  \sup_{x} |\RR_{x,T}(f)| \le C \bigl (T^{\theta_{min}} \|f\|_{C^{r}}+ \sup |f|\bigr )\, , 
\mbox{ where } \theta_{min}= \frac{\log \tilde \rho_{GL}+\delta}{h_{top}}<0\, .
$$
Decomposing $f=\mu^s(f)+f-\mu^s(f)$,  we get
\begin{equation}\label{dominant}
T \cdot C_0(x,T)=H_{x,T}(1)=\int_0^T 1\, dt=T \, ,
\end{equation}
up to replacing $\OO_j(f\circ \pi)$
by $\OO_j((f-\mu^s(f))\circ \pi)$ in the right-hand-side of \eqref{GLexp}.

We refer to the introduction of \cite{GL} for the motivation
arising from the well-known  works
of Forni and Flaminio giving expansions of ergodic
averages along horocycles of (e.g.) geodesic flows on compact surfaces of constant
negative curvature in terms of  eigenvalues of the Laplacian and of
horocycle-flow invariant distributions (see e.g. \cite{FF}).
Some of the results of Flaminio and Forni have been  extended to
variable negative curvature geodesic flows \cite{Ad} by adapting the ideas
in \cite{GL}.

\begin{remark}[Bound for $r_0$]\label{rr0}
	An upper bound for $r_0$ can be read off from the bound
	for $\tilde \rho_{GL}(r)$ in
	\cite{GL2}, noting that the limit is $r$ and not $r-1$ in the
	case $\iota=1$ there, and taking care of the fact that one of the regularity
	exponents is an integer. In  \cite{GL}, the authors mention that  choosing
	$r_0\ge 1$ large enough such that 
	$$e^{h_{top}(F)}/\lambda^{r_0/2}<1$$
	suffices
	(where $1< \lambda <\min  \{\lambda_u, 1/\lambda_s\}$, see \eqref{deflambda},
	and $\lambda_{u}$, $\lambda_s$ are defined in the beginning of \S\ref{noref}).
	Compare this with \eqref{??} below.
\end{remark}

\subsection{Absence of deviation resonances and two consequences}

The main technical result of the present work, Theorem~\ref{nores}, is that,
if $r\ge r_1$ is large enough ($r_1$ depends on the expansion and contraction
factors of $F$, see \eqref{??} and \eqref{???})
then $|\tilde \rho_j|<1$ for all
$j=1, \ldots, N_{GL}$.
(In other\footnote{``Men han har jo ikke noget paa,'' sagde et lille Barn.} words, {\it the transfer operator $\widetilde \LL$ does not have deviation resonances.})
As a consequence (Corollary~\ref{cor1}), if
$r\ge \max \{r_0, r_1\}$, the exponents $\theta_j$ in the expansion \eqref{GLexp}
all satisfy
$
\theta_j <0
$. 
In particular (even in the presence of Jordan blocks),  there exists  $C'<\infty$  and $\theta'_{min}<0$ such that
 \begin{equation}\label{GLexp'}
\sup_x |H_{x,T}(f)-T  \cdot \mu^s(f) |\le  
C' (T^{\theta'_{min}} \|f\|_{C^{r}}+ \sup |f|)\, ,  \, \forall \, T >0 \, .
\end{equation}

\noindent That is, {\it there are no deviations to the convergence of horocycle ergodic integrals.}

We next mention a  consequence of Theorem~\ref{nores} about the ``cohomological
equation.''
Since $h^t(x)$ is minimal,  Gottschalk--Hedlund's theorem (see \cite{GH} for a recent account) implies,  for any
continuous $f$, that $\sup_{x,T} |H_{x,T}(f)|<\infty$ if and only
if $f$ is a continuous coboundary, i.e. there exists a continuous function $\bar f$ 
such that for all $x$ and
all $T>0$, the following cohomological equation holds:
$$
\bar f(x)- \bar f\circ h^T(x)=\int_0^T f \circ h^t(x)\, dt \, .
$$

The expansion \eqref{GLexp'} from Corollary~\ref{cor1} thus implies the following {\it dichotomy} for the Giulietti--Liverani flow of  a $C^r$ Anosov diffeomorphism
with $r \ge \max\{r_0, r_1\}$: 
{\it If $f\in C^r$ then either $\mu^s(f)\ne 0$ (and $H_{x,T}(f)$
grows linearly) or $f$ is a continuous coboundary.}

\medskip

 Another immediate consequence of Theorem~\ref{nores} about the absence of
 deviation resonances for $\widetilde \LL$   is Corollary~ \ref{cor2}: For any $C^\infty$ Anosov diffeomorphism $F$ on the two-torus, the
{\it correlations for the unique measure of maximal entropy $\mu_{top}$  and $C^\infty$ observables
decay with rate strictly smaller than $e^{-h_{top}(F)}$.} 
(In fact Corollary~\ref{cor2} only requires $C^r$ for large
enough $r$.)
\medskip

Giovanni Forni \cite{F} obtained (independently and
simultaneously) related results\footnote{We focus on the analogue
of Corollary~\ref{cor2} but Forni's main result is an analogue
of Corollary~\ref{cor1} about equidistribution of stable or unstable curves.}, in a
more general setting: For any $C^r$ ($r>1$) pseudo Anosov diffeomorphism on a compact surface, the correlation spectrum of the Margulis measure is
determined by the action of the diffeomorphism on the first cohomology, up
to a power law error term (his results do not
imply Corollary~\ref{cor2}, even when the diffeomorphism is Anosov and $r$ is large,
see Remark~\ref{compareGF}).

\smallskip

We end this introduction by mentioning  that if $F$
is a $C^r$ Anosov diffeomorphism on a compact connected $C^\infty$ manifold $M$ 
of dimension $d\ge 2$, and
the dimension $d_s$ of its stable bundle is equal to one (this is the situation when
a horocycle flow of Giulietti--Liverani type can  be constructed), then $M$ is homeomorphic
(and thus diffeomorphic) to the torus $\TTT^d$, and $F$ is topologically conjugated to a linear
toral automorphism $A$ of $\TTT^d$ (see \cite{Hiraide}
for a  recent account of this result of Franks and Newhouse).
See Remark~\ref{higherdim} about the limits of our approach
in  dimensions $d\ge 3$. 

\begin{remark}
After this paper was accepted for publication, J\'er\^ome Carrand \cite{Ca},
following a suggestion of Selim Ghazouani, used Denjoy--Koksma  to give
a very short proof of a slightly weaker result (applying to a slightly
larger class of flows): the ergodic integrals of a zero-average observable grow  at most logarithmically.
\end{remark}


\section{Statement of the main theorem and its two corollaries}
\label{noref}

Fix $r>1$ and let $F: \TTT^2 \to \TTT^2$ be a $C^r$ Anosov diffeomorphism on the two-torus.
This means that there exist $C<\infty$ and
$\lambda_u>1$, $\lambda_s < 1$ such that the tangent space splits as $T \TTT^2=E^s \oplus E^u$,
with $E^s$ and $E^u$ preserved by $DF$, such that, for all $x$,\footnote{$\|\cdot \|$ denotes the norm on  $T_x \TTT^2$ induced by the Riemann metric on $\TTT^2$.}  
$$
\|DF^n_x v\|\le C \lambda_s^n
\|v\|\, ,\,\,   \forall v\in E^s(x)\, , \, \, 
\|DF^{-n}_x v\|\le C \lambda_u^{-n}\|v\|\, , \,\,  \forall v\in E^u(x) \, ,\, \, \,\forall n\ge 1\,  .
$$
By Franks--Newhouse (see e.g. \cite{Hiraide}), since $d_s=1$, the Anosov diffeomorphism $F$ is topologically 
conjugated to a hyperbolic linear toral automorphism. In particular, $F$ is topologically
mixing. In fact, $d_s=1$ also implies that $E^s$
(see \cite[p. 805]{Hiraide}) is orientable. Fixing an orientation
of $E^s$, we assume that $DF$
preserves this orientation.

\subsection{The Artin--Mazur zeta function of $F$}
Let
$A=A_F\in Gl_2( \integer)$ be the hyperbolic matrix topologically conjugated to $F$
(i.e. the unique linear map in the homotopy class of $F$).
The eigenvalues\footnote{We can again reduce to the case $\sigma_A=1$ by considering $F^2$, but we find it instructive to write the zeta function in the general case.}
of $A$ are $$\sigma e^{h_{top}(F)} \mbox{ and } e^{-h_{top}(F)}<1
\, , \mbox{ with } \sigma:=\sigma_A=\det A\in \{\pm 1\} \, .$$
(The contracting  eigenvalue is positive by our  orientation preserving assumption.) Put $\lambda_A=e^{h_{top}(F)}$.
An easy computation gives
that the unweighted (Artin--Mazur) zeta function of $A$ is just
\begin{equation}\label{zzeta}
\zeta_A(z)=\exp \sum_{n=1}^\infty \frac{z^n}{n} \# \Fix A^n
=\begin{cases}
\frac{(1-z)^2}{(1-\lambda_A z) (1-z/\lambda_A)} &\mbox{ if }\sigma =+1\\
\frac{1-z^2}{(1-\lambda_A z) (1+z/\lambda_A)} 
&\mbox{ if } \sigma=-1\, .
\end{cases}
\end{equation}
(Use\footnote{See e.g. the proof of \cite[Lemma 18.6.2]{KH}.} that $\# \Fix A^n=|\det (1-A^{n})|=|1-(\sigma\lambda_A)^n| \cdot (1-\lambda_A^{-n})$ so that 
$\# \Fix A^n=-(-\lambda_A)^n - \lambda_A^{-n}=
\lambda_A^n +(-1)^n \lambda_A^{-n}$ if $\sigma=-1$ and $n$ is odd, while $\# \Fix A^n=-2 + \lambda_A^n+\lambda_A^{-n}$ otherwise,
and handle separately even and odd
$n$ in this case, using 
that $\exp\sum_{j=1}^\infty 2\cdot {z^{2j}}/{2j}=(1-z^2)$.)

\subsection{The extended dynamics $\tilde F$ and the transfer operator
$\widetilde \LL$} 

Identifying $T_x \TTT^2$ with $\real^2$ for any $x\in \TTT^2$, we 
let $x \mapsto \CC^s(x)\subset T_x \TTT^2$ be a smooth stable cone field for $F$, that
is, each $\CC^s(x)$ is a strict subset of $\real^2$ of nonempty interior, with
 $\xi \cdot \CC^s(x)\subset \CC^s(x)$ for all $\xi \in \real$,
and such that 
$$\overline{DF^{-1}( \CC^s(x))} \subset \CC^s(F^{-1}(x))\cup \{0\}\, ,
\, \forall x\in \TTT^2\, , 
$$
and there exists $\nu_s<1$ such that 
$$
\|D F^{-1}_x (v)\|\ge \nu_s ^{-1}\|v\|\, ,\, \forall x \in \TTT^2\, ,
\forall v \in C(F(x))\, .
$$
We consider the following compact subspace of the unit tangent bundle of $\TTT^2$:
$$ \Omega^ s=
\{ (x,v)\in \TTT^2 \times \real^2\mid
\|v\|=1  \, , \quad v \in \overline {\CC^s(x)}\}\, .
$$
Orientability of $E^s$ gives a decomposition $\Omega^ s
=\Omega^ s_+\cup \Omega^ s_-$. Since $DF$ preserves the orientation of $E^s$,
the $C^{r-1}$ map defined on $\{(x,v) \in \Omega^s_+
\mid  DF_x(v)\in \overline{\CC^s(x)}\}$ by
$$\tilde F(x,v)=
\biggl (F(x), \frac{D_x F(v)}{\|D_x F(v)\|}\biggr )
$$
leaves invariant the set
$$
E^s_{1,+}:=\{(x,v)\in \Omega^s_+ \mid v \in E^s(x) \}
=\{(x,v^s_{1,+}(x))\mid x \in \TTT^2 \} \, .
$$

It is easy to see that $(\tilde F,E^s_{1,+})$ is a topologically mixing Axiom A repellor, with stable dimension one and unstable dimension two, and that the expansion in the new unstable direction is  not smaller than $\lambda_u /\nu_s> \lambda_u$. 
For each $n \ge 1$, the periodic orbits of $F^n$ are in bijection with the periodic orbits
of $\tilde F^n$ via the map
$x\mapsto (x, v^s_{1,+}(x))$.
Following \cite{GL}, we set
\begin{equation}\label{deftildeg}
\tilde g(x,v)=\frac{1}
{|\Det DF|_{V}|\circ \tilde F^{-1}(x,v) }\, ,
\end{equation}
where $V(\tilde F^{-1}(x,v))=\real \cdot DF^{-1}_x v \subset T_{F^{-1}(x)} \TTT^2$ is the line generated by $DF^{-1}_x v$,
and we consider the weighted transfer operator $\widetilde \LL$ of $\tilde F$ defined by
\begin{equation}\label{deftildeL}
(\widetilde \LL \tilde \varphi) (x,v)= (\tilde g \tilde \varphi) \circ \tilde F^{-1}(x,v)
\, .
\end{equation}
(As explained in \cite{GL}, the operator $\widetilde \LL$ corresponds to the action of $F$ on one-forms.)

\subsection{The essential spectrum of $\widetilde \LL$ on the Banach space $\widetilde \BB$}
\label{intro}
Since $\lambda_u /\nu_s >\lambda_u$,  and $\tilde d_s=1$, if $r< \infty$ the results\footnote{Theorem~2.1 there shows that one does not need to replace $r-1$ by $r-2$.} of \cite[Theorem 1.1]{BT} (see also
\cite[Chap. 5]{Babook} for a pedestrian account)
 imply that for any $\epsilon >0$,
 there exists a
Banach space $\widetilde \BB=\widetilde \BB^{t,s}(\tilde F)$  of anisotropic distributions on $\Omega^s_+$
(supported in a neighbourhood $\EE$ of $E^s_{1,+}$)
such that the essential spectral radius of $\widetilde \LL$
on $\widetilde \BB$ is strictly\footnote{Take the Banach space
	$C^{t,s}(\tilde F, \EE)$ 
	in \cite[Theorem 1.1]{BT}, for $\epsilon>0$ and  $t-(r-1-\epsilon)<s<0<t$.} smaller than 
\begin{align}\label{deff}
\tilde \rho_{BT}:=\tilde \rho_{BT}(r)&=\exp
\max_{\substack{s,t \in \real\\ t-(r-1-\epsilon)\le s<0<t}} \, \, \, \sup_{\tilde \mu\in  \Erg(\tilde F)}\, 
\{h_{\tilde \mu} + \max\{-t \ell^u_{\tilde \mu}, 
 |s| \ell^s_{\tilde \mu}\}\}\\
\nonumber &= \exp
 \max_{\substack{s,t \in \real\\ t-(r-1-\epsilon)\le s<0<t}} \, \, \, \sup_{ \mu\in  \Erg(F)}\, 
\{h_{ \mu} + \max\{-t \ell^u_{\tilde \mu}, |s| \ell^s_{\mu}\}\, ,
\end{align}
where 
$\ell^{u/s}_{\tilde \mu}=\int \log (\det D F|_{E^{u/s}})\, d{\tilde \mu}$, and
$\Erg(\tilde F)$ is the set
of   ergodic $\tilde F$-invariant probability measures, which is in bijection with
$\Erg(F)$, with  $h_{\tilde \mu}= h_{\mu(\tilde \mu)}$ and
$\ell^{u/s}_{\tilde \mu}=\ell^{u/s}_{\mu(\tilde \mu)}$, where
$\ell^{u/s}_{\mu}=\int \log (\det D F|_{E^{u/s}})\, d{\mu}$. (The $\ell^u$ are strictly positive
and the $\ell^s$ strictly negative.)

Note that  by \cite[Chap.~5]{Babook}  we have $\tilde \rho_{BT}\le 
\frac{e^{h_{top}(F)}}{\lambda^{(r-1)/2}}$ for any $\lambda$ such that 
\begin{equation}\label{deflambda}
1< \lambda <\min  \{\lambda_u, 1/\lambda_s\}\, .
\end{equation}
Since $\tilde F$ is mixing, $\tilde \rho_0:=e^{h_{top}}$ is a simple eigenvalue and the only maximal eigenvalue.
Set 
\begin{equation}\label{deftilderho}
\tilde \rho_{ess}:=\max \{\tilde \rho_{GL}, \tilde \rho_{BT}\}\, .
\end{equation}
The  sets
$\Sigma_{GL}:=\{\tilde \rho \in \spe(\widetilde \LL|_{\widetilde \BB_{GL}})  \mid |\tilde \rho|> \tilde \rho_{ess}\}$
and $\Sigma_{BT}:=\{\tilde \rho \in \spe(\widetilde \LL|_{\widetilde \BB})  \mid |\tilde \rho|> \tilde \rho_{ess}\}$
coincide (including multiplicities) by \cite[App. A.2]{BT}. In addition, the finitely many corresponding generalised eigenvectors lie in $\widetilde \BB_{GL}\cap \widetilde \BB$.
In view also of the results about the dynamical
determinant \eqref{defdin} recalled below, it is  legitimate to call the 
eigenvalues\footnote{The eigenvalues  $\{\tilde \rho_j\}_{j=1}^N$ are in bijection with
the possible poles $\upsilon_j$ of the Fourier transform of the correlation
function of the measure of maximal entropy of
$F$,  see also Corollary~\ref{cor2}.}  $\Sigma_{BT}=\{\tilde \rho_j\}_{j=0}^{N}$ 
(repeated with multiplicity, ordered with decreasing modulus) of $\widetilde \LL$ of modulus 
$|\tilde \rho_j|>\tilde \rho_{ess}$
the {\it resonances} of $F$. We call\footnote{Note that $N_1=N_{1,GL}$.} the $N_1\le N$ resonances $\{\tilde \rho_j\}_{j=1}^{N_1}$ with 
 $1\le |\tilde \rho_j | <e^{h_{top}(F)}$ the {\it deviation resonances} of $F$.
 
 Since $\sup_{\mu \in \Erg(F)}\, 
\{h_{\mu} -\tau  \ell^u_{\mu}\}<0$ if $\tau >1$, it is not hard to see that
$\tilde \rho_{BT}<1$ if 
\begin{equation}\label{??}
r> r_1 :=1+\epsilon+ \sup_{{\mu} \in \Erg(F)}
\frac{\ell^u_{\mu}-\ell^s_{\mu}}{-\ell^s_{ \mu}}\, ,
\, .
\end{equation}
(If $F$ preserves  area, \eqref{??} reduces to $r_1=3+\epsilon$, taking
$t$ and $-s$ arbitrarily close to $(r-1)/2$.)
Note that \eqref{??} implies
\begin{equation}\label{???}r_1\le 1+\epsilon+\frac{-\log \Lambda_s+\log \Lambda_u}{-\log \lambda_s}\, ,
\end{equation}
with $\Lambda_u\ge \lambda_u$ and $\Lambda_s\le \lambda_s$ the strongest expansion,
 respectively  contraction
of $F$.

 \subsection{The determinant $\tilde d(z)$ of $\widetilde \LL$.}

Theorems 1.5 and \S2 of \cite{BT}  (see also \cite[Chap. 6]{Babook}) give  that the dynamical determinant
\begin{equation}\label{defdin}
d_{\tilde F, \tilde g}(z)=\exp -\sum_{n=1}^\infty \frac{z^n}{n} 
\sum_{(x ,v)\in \Fix \tilde F^n}
\frac{\prod_{k=0}^{n-1} \tilde g(\tilde F^k (x,v) )}{|\det (\Id - D\tilde F^{-n}(x, v))}
\end{equation}
admits a holomorphic extension to the open disc  of radius $\tilde \rho_{BT}^{-1}$,
in which the zeroes of $d_{\tilde F, \tilde g}(z)$  are exactly
the inverses of the eigenvalues of $\widetilde \LL$ of modulus $>\tilde \rho_{BT}$ 
(the order of the zero coincides with the algebraic multiplicity of the eigenvalue).

\subsection{Statement of results}

We now  state  our main result, using the notation from \S\ref{intro}, in particular
\eqref{deftildeL}, \eqref{deff},  and \eqref{defdin}:

\begin{theorem}[Absence of deviation resonances]\label{nores}
Fix $r>1$ and let $F$ be a $C^r$ Anosov diffeomorphism of the two-torus preserving
the orientation of $E^s$. 
	
The only zero of the dynamical determinant $d_{\tilde F, \tilde g}(z)$ 
in the closed  disc of radius $\min \{1,\tilde \rho_{BT}^{-1}\}$  is
a simple zero at $e^{-h_{top}(F)}$.
In particular the spectrum of the operator $\widetilde \LL$   (acting on $\widetilde \BB$)
outside of the open  disc of radius $\max \{1,\tilde \rho_{BT}\}$ consists 
in a simple eigenvalue at $e^{h_{top}(F)}$. (In the notation of \S\ref{intro}, we have $N_1=0$.)
\end{theorem}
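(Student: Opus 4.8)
The plan is to evaluate the flat traces defining $d_{\tilde F,\tilde g}(z)$ explicitly on periodic orbits, to recognise the resulting zeta function as a product of $\zeta_A(z)$ with a few auxiliary Ruelle determinants, and then to read off the conclusion from the rational form \eqref{zzeta} --- this is the only point where topological invariance is used. One may assume $\sigma_A=+1$: since $\sigma_{A^2}=1$, $\widetilde\LL_{F^2}=\widetilde\LL_F^2$, and the resonances of $\widetilde\LL_F^2$ are the squares of those of $\widetilde\LL_F$ (multiplicities added), the general case follows by applying the $\sigma_A=+1$ statement to $F^2$, using that $e^{h_{top}(F)}$ is a simple resonance of $\widetilde\LL_F$ to discard a possible resonance at $-e^{h_{top}(F)}$. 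Assume then $\sigma_A=+1$. For $x\in\Fix F^n$ let $\lambda^s_n(x),\lambda^u_n(x)$ be the eigenvalues of $DF^n_x$; by the orientation hypothesis one has $0<\lambda^s_n(x)<1<\lambda^u_n(x)$. The eigenvalues of $D\tilde F^n$ at the corresponding point of $E^s_{1,+}$ are $\lambda^s_n(x)$, $\lambda^u_n(x)$ and the fibre eigenvalue $\lambda^u_n(x)/\lambda^s_n(x)$, while $\prod_{k=0}^{n-1}\tilde g(\tilde F^k(x,v))=1/\lambda^s_n(x)$ by \eqref{deftildeg}. Writing $X=\lambda^s_n(x)$, $Y=1/\lambda^u_n(x)$ (so $\lambda^s_n/\lambda^u_n=XY$) and using $|\det(\Id-DF^{-n}_x)|=(X^{-1}-1)(1-Y)$, the inner sum in \eqref{defdin} becomes
\begin{equation*}
\sum_{(x,v)\in\Fix\tilde F^n}\frac{\prod_{k=0}^{n-1}\tilde g(\tilde F^k(x,v))}{|\det(\Id-D\tilde F^{-n}(x,v))|}=\sum_{x\in\Fix F^n}\frac{1}{(1-X)(1-Y)(1-XY)}\, .
\end{equation*}

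Next I would use the elementary identity
\begin{equation*}
\frac{1}{(1-X)(1-Y)(1-XY)}=1+\frac{X}{(1-X)(1-Y)}+\frac{Y}{(1-X)(1-Y)}+\frac{X^2Y^2}{(1-X)(1-Y)(1-XY)}
\end{equation*}
and identify, after summing over $x\in\Fix F^n$, each of the last three terms as the $n$-th flat trace of a transfer operator: $\sum_x\frac{X}{(1-X)(1-Y)}=\sum_x|\det(\Id-DF^{-n}_x)|^{-1}$, of the operator $\LL_0$ of $F$ on functions with weight $\equiv1$; $\sum_x\frac{Y}{(1-X)(1-Y)}=\sum_x|\det DF^{-n}_x|\,|\det(\Id-DF^{-n}_x)|^{-1}$, of the operator $\LL_2$ of $F$ with weight $|\det DF^{-1}|$; and $\sum_x\frac{X^2Y^2}{(1-X)(1-Y)(1-XY)}=\sum_{(x,v)}\lambda^s_n(x)\lambda^u_n(x)^{-2}\,|\det(\Id-D\tilde F^{-n})|^{-1}$, of the operator $\widetilde\LL'$ of $\tilde F$ whose weight is $\tilde g$ times the extra contraction $(DF|_{E^s}/DF|_{E^u})^{2}$. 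Since $\sum_{x\in\Fix F^n}1=\#\Fix F^n=\#\Fix A^n$ by the periodic-orbit bijection and topological conjugacy, exponentiating $-\sum_{n\ge1}\tfrac{z^n}{n}(\,\cdot\,)$ and inserting \eqref{zzeta} yields
\begin{equation*}
d_{\tilde F,\tilde g}(z)=\zeta_A(z)^{-1}\,d_{\LL_0}(z)\,d_{\LL_2}(z)\,d_{\widetilde\LL'}(z)\, ,\qquad \zeta_A(z)^{-1}=\frac{(1-\lambda_A z)(1-z/\lambda_A)}{(1-z)^2}\, ,
\end{equation*}
first as an identity of power series near $0$, hence, by analytic continuation, wherever both sides are defined.

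It remains to locate the zeros of the three auxiliary determinants on $\{|z|\le1\}$. The weight of $\widetilde\LL'$ has strictly negative pressure, $\sup_{\mu\in\Erg(F)}\{h_\mu+\ell^s_\mu-2\ell^u_\mu\}<0$ (because $h_\mu\le\ell^u_\mu$ and $\ell^s_\mu<0<\ell^u_\mu$), so its flat traces decay exponentially and $d_{\widetilde\LL'}$ is holomorphic and zero-free on a disc of radius $>1$. For $\LL_0$, expanding $|\det(\Id-DF^{-n}_x)|^{-1}$ in powers of $X,Y$ exhibits its $n$-th flat trace as $\sum_{x\in\Fix F^n}\lambda^s_n(x)$ plus terms whose weights $(DF|_{E^s})^i(DF|_{E^u})^{-j}$, $(i,j)\neq(1,0)$, have strictly negative pressure; and $\sum_{x\in\Fix F^n}\lambda^s_n(x)=\sum_{x\in\Fix F^{-n}}|\det(DF^{-n}_x|_{E^s})|^{-1}$ is precisely the normalising constant of the periodic-orbit approximation of the SRB measure of $F^{-1}$, which tends to $1$ --- exponentially fast, since $F$, hence $F^{-1}$, is $C^r$ Anosov and topologically mixing. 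Thus $d_{\LL_0}(z)=(1-z)h_0(z)$ with $h_0$ holomorphic and zero-free on $\{|z|\le1\}$; the same argument with the SRB measure of $F$ gives $d_{\LL_2}(z)=(1-z)h_2(z)$. Substituting into the identity above, the factor $(1-z)^{-2}$ cancels exactly and
\begin{equation*}
d_{\tilde F,\tilde g}(z)=(1-\lambda_A z)(1-z/\lambda_A)\,h_0(z)\,h_2(z)\,d_{\widetilde\LL'}(z)\, ,
\end{equation*}
so on the disc of radius $\min\{1,\tilde\rho_{BT}^{-1}\}$, where $d_{\tilde F,\tilde g}$ is holomorphic, its only zero is the simple one at $z=1/\lambda_A=e^{-h_{top}(F)}$ contributed by $(1-\lambda_A z)$. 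The spectral statement for $\widetilde\LL$ follows.

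The step I expect to be the main obstacle is the last one: turning ``$\sum_{x\in\Fix F^n}\lambda^s_n(x)\to1$'' (and its counterpart for $F$) into the assertion that $d_{\LL_0}$ and $d_{\LL_2}$ each contribute exactly a simple zero at $z=1$ on the closed unit disc. One needs the convergence to be exponential and uniform over the tails of the geometric expansions, which forces one to invoke exponential decay of correlations for the SRB measures of both $F$ and $F^{-1}$ together with the strict Ruelle inequality off those measures. Verifying that $\LL_0,\LL_2,\widetilde\LL'$ fall within the framework of \cite{BT}, so that their dynamical determinants are genuinely holomorphic on the discs used, and the reduction to $\sigma_A=+1$, are comparatively routine.
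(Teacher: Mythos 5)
Your proposal is correct, and it is genuinely a different algebraic route from the paper's. You expand the single partial-fraction identity
\begin{equation*}
\frac{1}{(1-X)(1-Y)(1-XY)}=1+\frac{X}{(1-X)(1-Y)}+\frac{Y}{(1-X)(1-Y)}+\frac{X^2Y^2}{(1-X)(1-Y)(1-XY)}
\end{equation*}
to write $d_{\tilde F,\tilde g}=\zeta_A^{-1}\,d_{\LL_0}\,d_{\LL_2}\,d_{\widetilde\LL'}$; the paper instead inserts $1=|\det(\Id-D\tilde F^{-n})|/|\det(\Id-D\tilde F^{-n})|$ and expands the $3\times3$ determinant of $\Id-D\tilde F^{-n}$, arriving at \eqref{magi2} with six Ruelle determinants of $\tilde F$ (of which one pair cancels). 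Your factorization is more economical: two of your auxiliary determinants, $d_{\LL_0}$ and $d_{\LL_2}$, live over $F$ on $\TTT^2$ with the smooth weights $1$ and $1/|\det DF|$ (so their [BT]-spaces require no H\"older-cone subtleties), while your $\widetilde\LL'$ has exactly the paper's weight $(\tilde g^u)^2/\tilde g=\tilde g\,(DF|_{E^s}/DF|_{E^u})^2$ and coincides with the paper's operator $\NN$. One can check the two factorizations are algebraically equivalent: indeed $d_{\LL_0}d_{\LL_2}=d_{\tilde F,1}d_{\tilde F,1/\det DF}/(d_{\tilde F,\tilde g^u/\tilde g}d_{\tilde F,(\tilde g^u)^2})$ holds as a flat-trace identity (the left- and right-hand numerators are $X(1-XY)+Y(1-XY)$ and $X+Y-X^2Y-XY^2$, which agree).

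The step you flag as the ``main obstacle'' --- turning the convergence $\sum_{x\in\Fix F^n}\lambda^s_n(x)\to1$ into the assertion that $d_{\LL_0}$ has a single simple zero at $z=1$ on the closed unit disc --- does not actually require quantitative periodic-orbit asymptotics or decay of correlations. Exactly as you sketch at the end and as the paper does in its warmup, one simply observes that $d_{\LL_0}(z)=d_{F,1}(z)$ \emph{is} the Ruelle determinant of the normalised SRB transfer operator $\LL_{F^{-1},SRB}\varphi=|\det DF|\,(\varphi\circ F)$ of $F^{-1}$, acting on a Baladi--Tsujii space $C^{t,s}(F^{-1},\TTT^2)$ with $t-(r-1)<s<0<t$. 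By \cite[Theorems 1.1 and 1.5]{BT} this operator has essential spectral radius $<1$, spectral radius $1$, and (by mixing) a simple eigenvalue at $1$ as its only eigenvalue on the unit circle. The claimed zero structure of $d_{\LL_0}$ follows directly, and symmetrically for $d_{\LL_2}$ via $\LL_{F,SRB}$. One small imprecision: the pressure controlling the flat traces of $\widetilde\LL'$ is $\sup_\mu\{h_\mu+2\ell^s_\mu-2\ell^u_\mu\}$, not $\sup_\mu\{h_\mu+\ell^s_\mu-2\ell^u_\mu\}$, because the determinant term contributes an extra factor $\lambda^s_n$; this only makes the quantity more negative, so it does not affect the conclusion. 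Your reduction to $\sigma_A=+1$ via $F^2$ is also fine, since $\tilde F$ is mixing so $-e^{h_{top}}$ cannot be an eigenvalue; the paper instead keeps $\sigma_A$ and absorbs the sign into $w=\sigma_A z$.
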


\begin{remark}If $F$ does not preserve the orientation of $E^s$, the same
result holds up  to
introducing  a non-mixing extension $\tilde F$ of $F$ such that
 $\tilde F^{-1}$ exchanges $\Omega^ s_+$ and $\Omega^ s_-$.
 The only difference is that there are two maximal eigenvalues, $\pm e^{h_{top}(F)}$.
\end{remark}
	
\noindent Recalling $r_0$ from \S\ref{sec0} (see also Remark~\ref{rr0}), 
and using the notation from \S\ref{intro},
in particular \eqref{deftilderho} and \eqref{??},  we get:
	
\begin{corollary}[No deviations for horocycle ergodic integrals]\label{cor1}
Let $F:\TTT^2\to \TTT^2$ be a $C^r$ Anosov diffeomorphism  which preserves the
orientation of $E^s$. If $r\ge \max\{r_0, r_1\}$, so that $\tilde \rho_{ess}<1$, 
then  for any
$\delta>0$
there\footnote{$\|f\|_{C^{r-1}}$ can be replaced by a weaker norm.  In the area
preserving case, $C^{(r-1)/2}$.} exists  $C'<\infty$ such that
for any $C^{r-1}$ function $f:\TTT^2\to \complex$
\begin{equation}\label{GLexp''}
\sup_x |H_{x,T}(f)- T  \cdot \mu^s(f) |\le  
C' (T^{\theta'_{min}} \|f\|_{C^{r-1}}+ \sup |f|)\, , \, \forall \, T >0 \, ,
\end{equation}
where  (taking small enough $\delta$)
$$\theta'_{min}= \frac{\log \tilde \rho_{ess}+\delta}{h_{top}}<0 \mbox{ if } N=0\, ,
\,\,\,
\theta'_{min}=\frac{\delta+ \log \max \{\tilde \rho_{j}\}_{j=1}^N }{h_{top}}<0
\mbox{ if } N\ge 1\, .
$$	
In particular $\mu^s(f)= 0$ if and only if $f$ is a continuous coboundary.
\end{corollary}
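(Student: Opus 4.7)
The plan is to feed Theorem~\ref{nores} directly into the Giulietti--Liverani expansion \eqref{GLexp}, use the normalisation \eqref{dominant}, and then invoke Gottschalk--Hedlund for the coboundary equivalence. Theorem~\ref{nores} gives $N_1 = 0$, so every resonance $\tilde\rho_j$ with $j \ge 1$ appearing in \eqref{GLexp} satisfies $|\tilde\rho_j| < 1$, equivalently $\theta_j = h_{top}^{-1}\log|\tilde\rho_j| < 0$. Combined with \eqref{dominant} (which absorbs $\mu^s(f)$ into the leading term by replacing $\OO_j(f\circ\pi)$ with $\OO_j((f-\mu^s(f))\circ\pi)$), the expansion \eqref{GLexp} rewrites as
\[
H_{x,T}(f) - T\,\mu^s(f) \;=\; \sum_{j=1}^{N} T^{\theta_j}\,C_j(x,T)\,\OO_j\bigl((f-\mu^s(f))\circ\pi\bigr) \;+\; \RR_{x,T}(f).
\]

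If $N = 0$, the bound on $\RR_{x,T}(f)$ given in \eqref{GLexp} is already \eqref{GLexp''} with $\theta'_{min} = h_{top}^{-1}(\log\tilde\rho_{ess}+\delta)$. If $N \ge 1$, each $|C_j(x,T)| \le C$ is uniformly bounded in $(x,T,j)$ and $|\OO_j((f-\mu^s(f))\circ\pi)| \le \|\OO_j\|_{\widetilde\BB_{GL}^*} \cdot \|(f-\mu^s(f))\circ\pi\|_{\widetilde\BB_{GL}} \lesssim \|f\|_{C^{r-1}}$ by continuity of the functionals $\OO_j\in\widetilde\BB_{GL}^*$ together with the standard embedding of smooth observables (pulled back by $\pi$) into the anisotropic space $\widetilde\BB_{GL}$. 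For $T \ge 1$, each of the finitely many summands is thus bounded by $C\,T^{\max_j\theta_j}\,\|f\|_{C^{r-1}}$; any $(\log T)^q$ factors produced by possible Jordan blocks at resonances of modulus strictly less than one are absorbed into $T^{\delta/h_{top}}$, yielding \eqref{GLexp''} with $\theta'_{min} = h_{top}^{-1}(\delta + \log\max_{1 \le j \le N}|\tilde\rho_j|) < 0$ for small $\delta > 0$. For $T \in (0,1]$ the trivial bound $|H_{x,T}(f) - T\mu^s(f)| \le 2T\sup|f| \le 2\sup|f|$, combined with $T^{\theta'_{min}} \ge 1$, gives the estimate after enlarging $C'$.

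The coboundary equivalence is now immediate. If $\mu^s(f) = 0$, then \eqref{GLexp''} gives $\sup_{x,T}|H_{x,T}(f)| < \infty$; since $h^t$ is minimal on $\TTT^2$, Gottschalk--Hedlund \cite{GH} supplies a continuous $\bar f$ solving the cohomological equation displayed in the introduction, so $f$ is a continuous coboundary. Conversely, if $f(x) = \bar f(x) - \bar f(h^T(x))$ (in the appropriate integrated sense) with $\bar f$ continuous, then $|H_{x,T}(f)| \le 2\sup|\bar f|$ is uniformly bounded and \eqref{ergg} forces $\mu^s(f) = 0$. The only step that requires care is the $\widetilde\BB_{GL}$-continuity estimate on $\pi$-pullbacks of $C^{r-1}$ observables; this is built into the Giulietti--Liverani anisotropic norm and is where the precise smoothness exponent enters (the footnote in the statement indicates that $C^{r-1}$ may actually be weakened).
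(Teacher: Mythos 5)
Your proposal is correct and follows the same route as the paper's (very terse) proof: feed Theorem~\ref{nores} into the Giulietti--Liverani expansion \eqref{GLexp}, use \eqref{dominant} to isolate the leading term, absorb possible Jordan blocks into the $\delta$, and invoke Gottschalk--Hedlund for the coboundary equivalence. You simply make explicit a few details that the paper leaves implicit (the $\widetilde\BB_{GL}^*$-continuity bound on the $\OO_j$ terms and the trivial estimate for $T\le 1$).
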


 \begin{remark}\label{add}Our result does not exclude the existence of
 obstructions to Lipschitz (or  H\"older) regularity of the solution $\bar f$
	of the cohomological equation for $f$ when $\mu^s(f)=0$ \cite[end of \S 5.1.2, Remark 5.10]{GL}. Such constructions involve a different transfer
	operator (see the proof of \cite[Theorem 2.12]{GL}).\end{remark}

\begin{proof}[Proof of Corollary~\ref{cor1}]
Since we assume $\tilde \rho_{ess}<1$, the corollary follows from the expansion \eqref{GLexp} from
\cite[Theorem 2.8]{GL} combined with Theorem~\ref{nores} and \eqref{dominant}.
(If  $N\ge 1$,  taking
 $\delta\in (0, -\log \max \{\tilde \rho_{j}\}_{j=1}^{N})$ handles the possible
Jordan blocks. This could be replaced by an appropriate
power of $\log T$ in front of  $T^{\theta'_{min}}$.)
\end{proof}

Our final result is about  the unique measure of maximal entropy
$\mu_{top}$ of $F$:
	
\begin{corollary}[Rates of mixing for the measure of maximal entropy]\label{cor2}
Let $r>1$ and let $F:\TTT^2\to \TTT^2$ be a $C^r$ Anosov diffeomorphism on the two-torus. If 
 $\tilde \rho_{BT}<1$, then there exist
$C<\infty$ and $\rho_{top}<e^{-h_{top}(F)}$ such that for any $C^{r-1}$ functions\footnote{The $C^{r-1}$ norms can be replaced by weaker norms. In the area
preserving case, $C^{(r-1)/2}$.}
 $f_1, f_2:\TTT^2\to \complex$$$
 |\int( f_1 \circ F^k) f_2 \, d\mu_{top}-\int f_1 \, d\mu_{top}
 \int f_2 \, d\mu_{top}|\le C \rho_{top}^k \|f_1\|_{C^{r-1}} \|f_2\|_{C^{r-1}} \, ,
 \, \, \forall k \ge 0 \, .
$$
If 
 $\tilde \rho_{BT}\ge 1$, then for any $\rho_{top}>e^{-h_{top}(F)}\tilde \rho_{BT}$ there exists $C$
 such that the above bound holds. (Note that 
 $e^{-h_{top}(F)}\tilde \rho_{BT}<1$.)
\end{corollary}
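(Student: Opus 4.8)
The plan is to deduce Corollary~\ref{cor2} from Theorem~\ref{nores} by a standard quasi-compactness argument, once the correlations of $\mu_{top}$ have been rewritten through $\widetilde\LL$. First I would recall the dictionary (see the footnote to the definition of $\Sigma_{BT}$, and \cite{GL,GL2,BT}) between the eigenvalues $\{\tilde\rho_j\}$ of $\widetilde\LL$ on $\widetilde\BB$ of modulus $>\tilde\rho_{BT}$ and the poles of the Fourier transform of the correlation function of $\mu_{top}$. Concretely, from $\pi\circ\tilde F=F\circ\pi$, the eigenrelation $\widetilde\LL^*\tilde\mu^s=e^{h_{top}(F)}\tilde\mu^s$ and the identity $\widetilde\LL((\tilde\varphi\circ\tilde F)\,\tilde\chi)=\tilde\varphi\cdot\widetilde\LL\tilde\chi$, an induction on $k$ gives, for $\tilde\varphi=f_1\circ\pi$ and $\tilde\chi=\chi\cdot(f_2\circ\pi)$ with $\chi$ a fixed smooth cutoff supported in $\EE$ and equal to one near $E^s_{1,+}$,
$$\langle\,(f_1\circ\pi)\,(e^{-kh_{top}(F)}\widetilde\LL^k)\tilde\chi\,,\ \tilde\mu^s\,\rangle=\int_{\TTT^2}(f_1\circ F^k)\,f_2\ d(\pi_*\tilde\mu^s)\,,\qquad k\ge 0\,,$$
the cutoff being harmless since $(\tilde F,E^s_{1,+})$ is a repellor and $\tilde\mu^s$ is carried near $E^s_{1,+}$. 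After choosing a $C^r$ parametrization of $h^t$ for which $\pi_*\tilde\mu^s=\mu_{top}$ — which modifies $\widetilde\LL$ only by a $C^r$ coboundary, hence changes neither the resonances, nor the zeros of $d_{\tilde F,\tilde g}$, nor the conclusion of Theorem~\ref{nores} (see \cite[(2.5)]{GL}) — the right-hand side becomes the desired correlation $\int(f_1\circ F^k)f_2\,d\mu_{top}$.

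Next, I would use the peripheral spectral decomposition. Since $\tilde F$ is mixing, $e^{h_{top}(F)}$ is a simple eigenvalue of $\widetilde\LL$ on $\widetilde\BB$ and the only one of maximal modulus; write $e^{-h_{top}(F)}\widetilde\LL=\Pi+R$ with $\Pi$ the associated rank-one spectral projection, $\Pi R=R\Pi=0$, so $e^{-kh_{top}(F)}\widetilde\LL^k=\Pi+R^k$. Inserting this above,
$$\int(f_1\circ F^k)f_2\,d\mu_{top}=\langle(f_1\circ\pi)\,\Pi\tilde\chi,\tilde\mu^s\rangle+\langle(f_1\circ\pi)\,R^k\tilde\chi,\tilde\mu^s\rangle\,.$$
The first term is independent of $k$; since $F$ is topologically mixing, $\mu_{top}$ is mixing, so letting $k\to\infty$ (using $\|R^k\tilde\chi\|_{\widetilde\BB}\to 0$, proved in the last step) forces it to equal $\int f_1\,d\mu_{top}\int f_2\,d\mu_{top}$, so that one need not identify $\Pi$ explicitly.

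Finally, I would estimate $R^k$ via Theorem~\ref{nores}. By the correspondence between the zeros of $d_{\tilde F,\tilde g}$ and the eigenvalues of $\widetilde\LL$ of modulus $>\tilde\rho_{BT}$ recalled after \eqref{defdin}, Theorem~\ref{nores} gives $\spec(\widetilde\LL|_{\widetilde\BB})\cap\{|z|\ge\max\{1,\tilde\rho_{BT}\}\}=\{e^{h_{top}(F)}\}$. Hence the spectral radius of $R=e^{-h_{top}(F)}\widetilde\LL|_{\ker\Pi}$ is at most $\rho^*/e^{h_{top}(F)}$, where $\rho^*=\tilde\rho_{BT}$ if $\tilde\rho_{BT}\ge 1$, while $\rho^*=\max\bigl(\tilde\rho_{BT},\ \text{largest modulus of the finitely many eigenvalues of }\widetilde\LL\text{ with }\tilde\rho_{BT}<|z|<1\bigr)<1$ if $\tilde\rho_{BT}<1$. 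By the spectral radius formula, for any $\rho_{top}>\rho^*/e^{h_{top}(F)}$ there is $C$ with $\|R^k\|_{\widetilde\BB\to\widetilde\BB}\le C\rho_{top}^k$: when $\tilde\rho_{BT}<1$ one may pick $\rho_{top}<e^{-h_{top}(F)}$, and when $\tilde\rho_{BT}\ge 1$ any $\rho_{top}>e^{-h_{top}(F)}\tilde\rho_{BT}$ works, with $e^{-h_{top}(F)}\tilde\rho_{BT}<1$ since $\tilde\rho_{BT}$, being the essential spectral radius, is strictly below the spectral radius $e^{h_{top}(F)}$. It then remains to bound $|\langle(f_1\circ\pi)R^k\tilde\chi,\tilde\mu^s\rangle|\le\|f_1\circ\pi\|_{C^{t}}\,\|R^k\tilde\chi\|_{\widetilde\BB}\,\|\tilde\mu^s\|_{\widetilde\BB^*}\le C'\rho_{top}^k\|f_1\|_{C^{r-1}}\|f_2\|_{C^{r-1}}$, using the multiplier property of $C^t$ on $\widetilde\BB$ and the continuous inclusions $C^{r-1}(\TTT^2)\hookrightarrow C^t(\Omega^s_+)\hookrightarrow\widetilde\BB$ from \cite{BT} (valid for $t<r-1-\epsilon$; in the area-preserving case $C^{(r-1)/2}$ suffices, as there $t$ and $-s$ may be taken close to $(r-1)/2$). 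The $C^\infty$ statement follows by choosing $r$ so large that $\tilde\rho_{BT}(r)<1$.

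The main obstacle I expect is the first step: pinning down which transfer operator, and with which reference measure and horocycle parametrization, computes the $\mu_{top}$-correlations, and checking that its spectrum in $\{|z|>\tilde\rho_{BT}\}$ is exactly $\{\tilde\rho_j\}$. This is the standard — but not entirely automatic — translation between the Ruelle resonances of the extended one-form dynamics encoded by $\widetilde\LL$ and the correlation decay of the measure of maximal entropy, and it is precisely where the $C^r$-coboundary invariance of the resonances is invoked to dispose of the normalization ambiguity. Everything afterwards — the rank-one peripheral projection, the quasi-compactness bound on $R^k$, and the $C^{r-1}$-to-$\widetilde\BB$ embeddings — is routine.
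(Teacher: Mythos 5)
Your spectral-decomposition scaffolding is sound, but the first step contains a genuine gap that invalidates the argument as written. You compute, correctly, that
\[
\langle(f_1\circ\pi)\,(e^{-kh_{top}(F)}\widetilde\LL^k)\tilde\chi,\,\tilde\mu^s\rangle
=\int_{\TTT^2}(f_1\circ F^k)\,f_2\,\,d(\pi_*\tilde\mu^s)\,,
\]
but $\pi_*\tilde\mu^s=\mu^s$ is the unique horocycle-invariant measure, \emph{not} the measure of maximal entropy. You then try to close the gap by ``choosing a $C^r$ parametrization of $h^t$ for which $\pi_*\tilde\mu^s=\mu_{top}$.'' No such parametrization exists in general: a $C^r$ time-change by a positive speed $\tau$ replaces the horocycle-invariant measure $\mu^s$ by $\tau^{-1}\mu^s$ (normalised), which is absolutely continuous with respect to $\mu^s$; equivalently, the conditionals of the reparametrised invariant measure on stable leaves are still absolutely continuous with respect to arc-length. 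But the conditionals of $\mu_{top}$ on stable leaves are the Margulis conditionals, which are generically singular with respect to arc-length (otherwise $\mu_{top}$ would coincide with the SRB measure of $F^{-1}$). Hence $\mu_{top}$ is generically mutually singular with $\mu^s$, and the reparametrization step fails; in fact, the conclusion you extract in your second step (that the $\Pi$-term equals $\mu_{top}(f_1)\mu_{top}(f_2)$) would, as written, only produce the decorrelation of $\mu^s$, which is a different statement.

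The correct route, which the paper uses, does not reparametrize at all: one introduces the right eigendistribution $\tilde\mu^u\in\widetilde\BB$, $\widetilde\LL\tilde\mu^u=e^{h_{top}(F)}\tilde\mu^u$, and invokes the Gouëzel--Liverani identification $\mu_{top}(\varphi)=\mu^s(\varphi\,\mu^u)=\tilde\mu^s\bigl((\varphi\circ\pi)\,\tilde\mu^u\bigr)$. In your computation this amounts to replacing the ad hoc test vector $\tilde\chi=\chi\cdot(f_2\circ\pi)$ by $(f_2\circ\pi)\,\tilde\mu^u\in\widetilde\BB$ (which also removes the need for the cutoff $\chi$, since $\tilde\mu^u$ already lives in $\widetilde\BB$). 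One then obtains directly
\[
\int(f_1\circ F^k)f_2\,d\mu_{top}
=\tilde\mu^s\bigl((f_1\circ\pi)\,(e^{-kh_{top}(F)}\widetilde\LL^k)[(f_2\circ\pi)\tilde\mu^u]\bigr)\,,
\]
and the rank-one projection $\Pi=\tilde\mu^u\otimes\tilde\mu^s$ produces $\mu_{top}(f_1)\mu_{top}(f_2)$ immediately, with no need for the $k\to\infty$ bootstrap. From that point on, your quasi-compactness bound on $R^k$, the spectral input from Theorem~\ref{nores}, and the $C^{r-1}\hookrightarrow\widetilde\BB$ multiplier estimates are exactly the paper's argument. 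One last omission: the statement of Corollary~\ref{cor2} does not assume that $F$ preserves the orientation of $E^s$; the paper first reduces to the orientation-preserving case by passing to $F^2$ (which has the same $\mu_{top}$ and doubled entropy), a step you should add.
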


Recalling \eqref{??}, note that if $r\ge r_1$ then $\tilde \rho_{BT}<1$. Clearly, if $\tilde \rho_{BT}\ge 1$, then the bound
$\rho_{top}$ given by Corollary~ \ref{cor2} is $\ge e^{-h_{top}(F)}$.
	
\begin{remark}[Comparing with the results of \cite{F}]\label{compareGF}
In our setting, Forni's results imply in particular\footnote{We do not discuss Forni's
analog of Corollary~\ref{cor1}, which contains an additional logarithmic factor
$(\log T)^2$ in the error term.} that if $F$ is $C^r$ for $r>2$, there exists
$C<\infty$  such that for any $C^1$ functions $f_1$ and $f_2$, we have
\begin{align}
\label{gio}	& |\int( f_1 \circ F^k) f_2 \, d\mu_{top}
	 - \int f_1 \, d\mu_{top}  \int f_2 \, d\mu_{top}| \le   C k^2 e^{- k h_{top}}\|f_1\|_{C^{1}} \|f_2\|_{C^{1}}, \,\, \forall k \ge 0   \, .
\end{align}	
It suffices to assume that $r>2$ and $f_1$ and $f_2$ are $C^1$ to get \eqref{gio}, but the error term
there  is not as good as
in Corollary~\ref{cor2}.
\end{remark}

\begin{proof}[Proof of Corollary~\ref{cor2}]
We may assume that $F$ preserves the orientation of $E^s$,
since otherwise we can replace
$F$ by $F^2$ (the two maps have the same measure of maximal
entropy and $h_{top}(F^2)=2 h_{top}(F)$).

Let    $\mu^u=\pi^*(\tilde \mu^u)$ where $\widetilde \LL(\tilde \mu^u)=e^{h_{top}(F)} \tilde \mu^u$, 
normalised by
$\mu^s(\mu^u)=\tilde \mu^s(\tilde \mu^u)=1$. By \cite{GL2},  
the distribution $\varphi \mapsto \mu^s(\varphi \mu^u)$
is in fact the unique measure of maximal entropy $\mu_{top}$ of $F$.
(The fact that \cite{GL2} use another Banach space
does not matter by \cite[App. A.2]{BT}.)
Then using the spectral decomposition for $\widetilde \LL$ and the 
information from Theorem~\ref{nores} gives the claim, just like in \cite{GL2}.
(For example, if $\tilde \rho_{BT}<1$, we may choose
$\rho_{top}$ as follows: If $\widetilde \LL$ has no eigenvalue except  $e^{h_{top}(F)}$ of modulus
strictly larger than $\tilde \rho_{BT}$, then we can take any $\rho_{top}>\tilde \rho_{BT}/e^{h_{top}(F)}$
since $\tilde \rho_{BT} <1$.
Otherwise, letting $ \tilde \rho_1$ be the eigenvalue 
of $\widetilde \LL$ of  largest modulus $\ne e^{h_{top}(F)}$, 
we have  $|\tilde \rho_1|<1$ by Theorem~\ref{nores}, and 
for any fixed integer $q\ge 0$, taking 
 $\rho_{top}>| \tilde \rho_1|/e^{h_{top}(F)}$,
 we have $k^q| \tilde \rho_1|^k e^{-k h_{top}(F)}
<C \rho_{top}^k$ for all $k$.)
\end{proof}

\section{Proof of Theorem~ \ref{nores}}

In order to prove Theorem~ \ref{nores}, we
introduce the following  notation:
If $\Phi:M\to M$ is an Axiom A diffeomorphism with  a basic set $X$
 and $\phi: X\to \complex$ admits a
H\"older extension to a  neighbourhood of $X$, we set $\phi^{(n)}=\prod_{k=0}^{n-1}  (\phi\circ \Phi^k)$, and 
$$
 d_{\Phi, \phi}(z)=\exp -\sum_{n=1}^\infty \frac{z^n}{n} 
\sum_{y \in \Fix \Phi^n \cap X}
\frac{  \phi^{(n)}(y) }{|\det (\Id - D\Phi^{-n}(y))} \, .
$$
For such  a fixed $\Phi$ and $X$, we say that $\phi_1 <_{Per} \phi_2$ if $\phi_1, \phi_2: X\to \real_*^+$  satisfy
$$
\limsup_{n \ge 1}\,  \sup_{y \in \Fix \Phi^n \cap X}\, 
\frac{(\phi_1^{(n)}(y))^{1/n}}{(\phi_2^{(n)}(y))^{1/n}} < 1 \, .
$$

Finally, 
the following two elementary facts will be used in the proofs: For a $2 \times 2$ matrix $Q$,
we have
$
\Det (\Id -Q)= 1+ \Det Q - \tr Q
$.
If $\widetilde Q$ is a $3\times 3$ matrix of the form
$\widetilde Q =\left (\begin{smallmatrix}Q & q\\ 0 & \eta \end{smallmatrix}\right )$,
with $q$ an arbitrary column vector in $\real^2$ and $\eta \in \real$, we have
$$
\Det (\Id -\widetilde Q)= (1+ \Det   Q - \tr  Q )(1- \eta)  \, .
$$
If $F=A$  (so that $r=\infty$) and  $\sigma_A=1$ then   the above
facts for $Q=A^{-n}$ give\footnote{
Formula \eqref{cute} recovers the eigenvalues of the operator 
associated to $A$ obtained in  \cite[ \S 5.2]{GL}.}
\begin{equation}\label{cute}
 d_{\tilde F, \tilde g }(z)= \prod_{j=0} ^\infty (1-z \lambda_A^{1-2j})\, .
\end{equation}
(We shall not use the above expression.) 

\begin{proof}[Proof of Theorem~\ref{nores}]
As a warmup, we assume that $r>3$, we fix $\epsilon >0$ small,
and we consider the transfer operator
\begin{equation}\label{thedef}
\LL \varphi(x)=\frac{\varphi (F^{-1}(x))}{|\Det DF|_{E^s}|\circ F^{-1} (x)}
\end{equation}
acting on the Banach space of distributions $\BB=C^{(1+\alpha)/2,-(1+\alpha)/2}(F, \TTT^2)$ defined in
\cite[\S4]{BT}, where its spectral radius is
 $e^{h_{top}(F)}$, while its
essential spectral radius is strictly smaller  
than (recalling $\lambda>1$ from \eqref{deflambda})
 $$\rho_{ess,\alpha}:=\frac{e^{h_{top}(F)}}{\lambda^{(1+\alpha)/2}}\, , $$ 
where the regularity\footnote{Of course, $\alpha=\infty$ in the linear case $F=A$, but Anosov proved that, generically, $\alpha <1$.}
of the bundle $E^s$ is
$C^{1+\alpha}$ for some $\alpha >0$.
 By mixing, $e^{h_{top}(F)}$ is simple and
 is the only eigenvalue of $\LL$ of maximal modulus. Finally \cite[Theorem 1.5]{BT} also
 implies that  the dynamical determinant $$d_{F,g^s}(z)
 =\exp -\sum_{n=1}^\infty \frac{z^n}{n} \sum_{x\in \Fix  F^n}
\frac{  (g^s)^{(n)}(x) }{|\det (\Id - D F^{-n}( x))|}\, , 
$$
where $g^s=(|\Det DF|_{E^s}|)^{-1}$,
 is holomorphic in the disc of
radius $1/\rho_{ess,\alpha}$ where its zeroes are the inverses of the eigenvalues of $\LL$.

 We will show that 
 $$
 \spec (\LL|_\BB)\cap
 \{\max ( \rho_{ess,\alpha}, 1) < |\rho| \le e^{h_{top}(F)}\}
 =\{ e^{h_{top}(F)}\}\, ,
 $$
 and if, in addition
 $\rho_{ess,\alpha}<1$ (since $\alpha$ is generically smaller
 than $1$, this is a very strong assumption, even for large $r$), then  $\LL:\BB\to \BB$ does not have eigenvalues outside the open unit disc except for $e^{h_{top}}$. The starting point is the fact that the Artin--Mazur zeta function
 of  $F$ coincides with the zeta function \eqref{zzeta} of its linear model:
 \begin{equation}\label{zeta1}
 \zeta_{F}(z)=\exp \sum_{n=1}^\infty \frac{z^n}{n} \bigl (\sum_{x \in \Fix F^n}
 1\bigr ) =
 \zeta_A(z) \, .
 \end{equation}

 Next, we use that  for all $n$ and any $x \in \Fix F^n$,
 \begin{align*}
 1&=
 \frac{|\Det (\Id -DF^{-n}(x))|}{|\Det (\Id -DF^{-n}(x))|}=\frac{|1+ \Det DF^{-n}(x) - \tr (DF^{-n}(x))|}{|\Det (\Id -DF^{-n}(x))|}\\
 &=\frac{(\sigma_A)^n(-1- \Det DF^{-n}(x) + \tr (DF^{-n}(x)))}{|\Det (\Id -DF^{-n}(x))|}\, .
 \end{align*}
 Since for any $x \in \Fix F^n$ we have  (using orientation preserving on $E^s$)
$$ 
\tr (DF^{-n}(x))= (|\Det DF^n|_{E^s}(x)|)^{-1}
+(\Det DF^n|_{E^u}(x))^{-1} \, , 
$$
setting $w=\sigma_A \cdot z$, it follows that 
\begin{equation}\label{magic}
\zeta_A(z)= 
\frac{d_{F,1}(w)\cdot d_{F,1/\Det DF}(w)}{d_{F,g^u}(w)\cdot d_{F,g^s}(w)}
\end{equation}
where $g^u=(\Det DF|_{E^u})^{-1}$.  
Let $\beta>0$ be\footnote{In fact, $\beta=\alpha$ in the present setting.} such that  $E^u(F)$ (and thus $g^u$) is
 $C^{1+\beta}$ and assume that $\epsilon <\beta<2$.
 Then there exists \cite[Theorem 1.1]{BT}  a Banach space $\BB_\beta=C^{(\beta-\epsilon)/2, -(\beta-\epsilon)/2}(F,\TTT^2)$ such that
 the transfer operator  $
\LL_{F, g^u}(\varphi)=
(g^u \varphi )\circ F^{-1}$ acting on $\BB_\beta$ has spectral radius $<1$
(because $|g^u|<_{Per} |\det DF|^{-1}$) and essential
spectral radius strictly smaller than its spectral radius, and, in  addition, 
$d_{F,g^u}(w)$ is holomorphic\footnote{We also have that $d_{F,g^u}(w)$ cannot vanish inside a disc of
radius $>1$. Since this determinant appears in the
denominator, its zeroes do not matter to us.} in a disc of radius $>1$.

We will see below that  the two  determinants  in the numerator
of \eqref{magic}
are holomorphic in the disc of radius $\rho_{SRB,ess}^{-1}>\max\{\rho_{ess,\alpha}^{-1} ,1\}$.
Since $d_{F,g^s}(w)$ is the dynamical determinant of $\LL$
which  is holomorphic in the disc of
radius $1/\rho_{ess, \alpha}$, the domains\footnote{Note that $d_{F,g^u}(z)$ is holomorphic
 in a disc of radius $>1$ with no zeroes in the
closed unit disc even if $\rho_{ess,\alpha} >1$. The argument for $\widetilde \LL$
will exploit a similar feature.} of holomorphy 
(and of spectral interpretation of zeroes) of the four determinants
in the right hand side of \eqref{magic} include the disc of radius $1/\rho_{ess,\alpha}$.

Applying \cite[Theorem 1.1]{BT} again, the transfer operator $\LL_{F,SRB}(\varphi)=
\frac{\varphi \circ F^{-1}}{|\Det DF|\circ F^{-1}}$ acting on $\BB$ has essential spectral radius 
bounded by $\rho_{SRB,ess} < \rho_{ess,\alpha}$ (we have $\rho_{SRB,ess} \le \lambda^{-(r-1)/2}$) and spectral
radius one, with 
a simple eigenvalue at $1$ as
 only eigenvalue on the unit circle. Therefore, if $\sigma_A=+1$
 then  $d_{F,1/\Det DF}(w)=d_{F,1/|\Det DF|}(z)$
 is holomorphic in the disc of radius  $\rho^{-1}_{SRB,ess}$, with $d_{F,1/|\Det DF|}(1)=0$, and this determinant has no other zeroes in the closed unit disc.
 If $\sigma_A=-1$
 then  $d_{F,1/\Det DF}(w)=d_{F,1/\Det DF}(-z)$
 is holomorphic in the disc of radius  $\rho^{-1}_{SRB,ess}$, with $d_{F,1/|\Det DF|}(-1)=0$, and this determinant has no other zeroes in the closed unit disc.

Finally,  we claim that $d_{F,1}(w)$ is holomorphic in the disc of radius  $\rho^{-1}_{SRB^*,ess}>1$,
with $d_{F,1}(1)=0$, where $1$  is  simple zero and the only zero
of $d_{F,1}(w)$ in the closed unit disc:
Indeed,  $d_{F,1}(w)$ can be viewed as the determinant
of the operator $$\LL_{F^{-1}, SRB} \varphi =|\Det DF | \cdot (\varphi \circ F)$$
acting on the Banach space $\hat \BB=C^{(t, s}(F^{-1})(F^{-1}, \TTT^2)$, 
for suitable $t-(r-1)<s<0$, associated to $F^{-1}$,
on which its essential spectral radius is bounded by $\rho_{SRB^*,ess}< \tilde \rho_{ess,\alpha}$. The spectral radius 
of
$\LL_{F^{-1}, SRB} $
is equal to $1$, and, since $F^{-1}$ is mixing, the eigenvalue
$1$ is simple  and it is the only eigenvalue of
modulus equal to one. (The  respective simple zeroes of $d_{F,1}(z)$ and
$d_{F,1/|\Det DF|}$ at $z=1$  account for the double zero of
$\zeta_{F}(z)$ at $z=1$ when $\sigma_A=+1$. Otherwise, the simple zeroes at $z=\pm 1$ account
for the zeroes of $(1-z^2)$.)

Assume by 
contradiction that $\LL$ 
has an eigenvalue $\rho\ne e^{h_{top}(F)}$ with
$|\rho|\ge 1$, then  $d_{F,g}(1/\rho)=0$. This would imply that 
$d_{F,1}(1/\rho)=0$ or $d_{F,1/|\Det DF|}(1/\rho)=0$ (to get a cancellation), and both
claims are impossible.

\medskip
We  now  prove the theorem: 
The only differences are that we will need $3\times 3$
matrices instead of $2\times 2$ matrices, and six determinants instead of four.
Also we only need that $E^u$ and $E^s$ are $C^\gamma$ for some $\gamma>0$.
To fix ideas, assume first that $\tilde \rho_{BT}<1$ so that
$\min \{1, \tilde \rho_{BT}^{-1}\}=1$.
First
 $$
 \zeta_{\tilde F}(z)=\exp \sum_{n=1}^\infty \frac{z^n}{n} \bigl (\sum_{x \in \Fix \tilde F^n}
 1\bigr ) =
 \zeta_A(z) \, .
 $$
 (Recall \eqref{zeta1} and \eqref{zzeta}.)
 Second, at any $\tilde x=(x,v)=(x, v^s_{1,+}(x))\in \Fix \tilde F^n$, we have
 $$D\tilde F^n_{\tilde x} =\left (\begin{smallmatrix}DF^n_x & a\\ 0 & \tilde \lambda^n( x) \end{smallmatrix}\right )
 \mbox { 
 with } \tilde \lambda^n(x)= \frac{\Det DF^n|_{E^u}(x)}{|\Det DF^n|_{E^s}(x)|}\, ,
 $$
  where $|\tilde \lambda^n(x)|\ge
 \frac{ \lambda^n_u} {\lambda^n_s}$. Therefore,
 we may use  for such $\tilde x$ the decomposition
  \begin{align*}
 1&=
 \frac{|\Det (\Id -D\tilde F^{-n}(\tilde x))|}{|\Det (\Id -\tilde DF^{-n}(\tilde x))|}\\
 &=
 \frac
 {(|1+ \Det DF^{-n}(x) - \tr (DF^{-n}(x))|) \bigl (1-\frac{|\Det DF^n|_{E^s}(x)|}{\Det DF^n|_{E^u}(x)} \bigr )}
 {|\Det (\Id -D\tilde F^{-n}(\tilde x))|}\, . 
 \end{align*}
Third, since for all $n$ and all $x\in \Fix F^n$,
\begin{align*}
&(|1+ \Det DF^{-n}(x) - \tr (DF^{-n}(x))|\\
&\quad =(\sigma_A)^n(-1- \Det DF^{-n}(x) +  (|\Det DF^n|_{E^s}(x)|)^{-1} +(\Det DF^n|_{E^u}(x))^{-1})
\, ,
\end{align*}
setting $w=\sigma_A \cdot z$, it follows that 
\begin{align}
\nonumber \zeta_A(z)
&=
\frac{d_{\tilde F,1}(w)\cdot d_{\tilde F,1/\Det DF}(w)
\cdot d_{\tilde F,\tilde g^u}(w)\cdot d_{\tilde F,(\tilde g^u)^2/g^s}(w)}
{d_{\tilde F,g^s}(w)\cdot d_{\tilde F,\tilde g^u}(w)
\cdot d_{\tilde F,\tilde g^u/\tilde g}(w) \cdot d_{\tilde F,(\tilde g^u)^2}(w)}\\
\label{magi2}& =
\frac{d_{\tilde F,1}(w)\cdot d_{\tilde F,1,\Det DF}(w)
\cdot d_{\tilde F,(\tilde g^u)^2/\tilde g}(w)}
{d_{\tilde F,\tilde g}(w)\cdot d_{\tilde F,\tilde g^u/\tilde g}(w) \cdot d_{\tilde F,(\tilde g^u)^2}(w)}\, , 
\end{align}
where $\tilde g$ was defined\footnote{Note that $\tilde g(x,v^s_{1,+}(x))=g^s(x)$.} in \eqref{deftildeg} and $\tilde g^u(x, v):=g^u(x)$
is $C^\gamma$.

As mentioned
above,
$d_{\tilde F,\tilde g}(z)$ is the dynamical determinant of $\widetilde \LL$ acting
on $\widetilde \BB$,
which is holomorphic in the disc of radius $1/\tilde \rho_{BT}$.

It is easy to see that $d_{\tilde F,1}(w)$  is the dynamical determinant
of the transfer operator associated to the SRB measure of the mixing
attractor $\tilde F^{-1}$
acting on the Banach space $\widetilde \BB_{\tilde F^{-1}}=\CC^{t, s}(\tilde F^{-1},\EE)$
for suitable $t-(r-1)<s<0$,
associated to  $\tilde F^{-1}$, so, by \cite[Theorem 1.5 and \S2]{BT}, this factor is holomorphic in
a disc of radius $\tilde \rho_{SRB^*,ess}^{-1}>\tilde \rho_{BT}^{-1}>1$, and its only zero in the closed unit disc is a simple zero at $w=1$. (If $\sigma_A=-1$ then this gives a zero at $z=-1$.)

Next, $d_{\tilde F,1/|\Det DF|}(w)$  is the dynamical determinant
of the transfer operator $\MM$ of the mixing
repellor $\tilde F$ weighted by $1/|\Det DF|$,
acting on the Banach space $\widetilde \BB$. By the Pesin
entropy formula the pressure\footnote{The pressure of 
	$-\log |\Det D \tilde F|_{E^u}|$
	for the repellor  $\tilde F$ is strictly smaller than zero.} of 
$-\log |\Det DF|_{E^u}|$
for the attractor  $F$ is equal to zero. Since this pressure coincides with the pressure
of 
$-\log |\Det DF|_{E^u}|$
for the repellor  $\tilde F$,
applying \cite[Theorem 1.5]{BT} the determinant $d_{\tilde F,1/|\Det DF|}(w)$ is holomorphic in
a disc of radius $\tilde \rho_{SRB,ess}^{-1}>\tilde \rho_{BT}^{-1}>1$ and its only zero in the closed unit disc is a simple zero at $w=1$.  
(The fixed point of $\MM$ is the SRB measure of $F$
in  $x$  multiplied by a Dirac mass at 
$v^1_{x,+}$.) If $\sigma_A=-1$, replacing $\MM$ by $-\MM$, we get that
the determinant $d_{\tilde F,1/\Det DF}(w)$ is holomorphic in
a disc of radius $\tilde \rho_{SRB,ess}^{-1}>\tilde \rho_{BT}^{-1}>1$ and its only zero in the closed unit disc is a simple zero at $w=-1$, giving a zero at $z=1$.

The last determinant in the numerator of \eqref{magi2}
is $d_{\tilde F,(\tilde g^u)^2/\tilde g}(w)$, which
is associated to a transfer operator $\NN$ of spectral radius $<1$
 on the Banach space 
$\widetilde \BB_{\gamma}=C^{\gamma/2, -\gamma/2}(\tilde F, \EE)$ using $(\tilde g^u)^2/\tilde g(z)<_{Per} |\det DF|^{-1}$
 (the essential spectral  radius of $\NN$ is  $<1$, but we do not claim
 that this essential spectral  radius  is smaller than $\tilde \rho_{BT}$). Therefore,
$d_{\tilde F,(\tilde g^u)^2/\tilde g}(w)$ cannot
vanish on the closed unit disc.

Finally, the spectral radii of the two remaining operators in the denominator
of \eqref{magi2}  acting
on $\widetilde \BB_{\gamma}$ are  smaller
 than $1$ since
 $$
{|\tilde g^u|}/{\tilde g}
<_{Per} {|\det DF|}^{-1} \, , \qquad
(\tilde g^u)^2 
<_{Per} {|\det DF|}^{-1}  \, .
$$
 
 So the right-hand-side of \eqref{magi2} is a quotient of two holomorphic functions in a disc of radius
 $>1$, such that the only zeroes of the numerator in the closed unit disc are
 a double zero at $z=1$ when $\sigma_A=1$, and simple zeroes at 
 $z=\pm 1$ otherwise.
 It follows that the only possible zero of $d_{\tilde F,\tilde g}(z)$  in the closed unit
 disc is a simple zero at $\lambda_A^{-1}=e^{-h_{top}(F)}<1$. This ends the proof of the theorem if $\tilde \rho_{BT} <1$.
 
 If $\tilde \rho_{BT} \ge 1$, the same argument works, replacing
 the unit disc by the disc of radius $\tilde \rho_{BT}$ for spectral claims and by the disc of radius
 $\tilde \rho_{BT}^{-1}$ for determinants. 
\end{proof}

\begin{remark}[Higher dimension]\label{higherdim}
Consider an Anosov diffeomorphism $F$ with $d_s=1$ and $d_u=2$. Then, assuming that 
$DF$ preserves the orientation of $E^s$, and that the eigenvalues
of the hyperbolic linear matrix $A$ conjugated to $F$ satisfy
$\lambda_s<1<\lambda_{u,min}<\lambda_{u,max}$,
the unweighted Artin--Mazur zeta function of $F$ is (note that $\lambda_s\lambda_{u,min}\lambda_{u,max}=1$, and, since $d$ is odd,
 $+1$ and $(-1)^d \det (A^{-n})$ cancel for all $n$)
$$
\zeta_A(z)=\frac{(1-\lambda_s z)(1-\lambda_{u,min}z)(1-\lambda_{u,max}z)}
{(1-z/\lambda_s)(1-z/\lambda_{u,min})(1-z/\lambda_{u,max})} \, .
$$
Factoring the zeta function as a product of  dynamical determinants 
for an extended dynamics  $\tilde F$ as in the proof of Theorem~\ref{nores},
one cannot  exclude a priori eigenvalues of modulus
$\ge 1$ for the operator (in the denominator) weighted by $1/|\det DF|_{E^s}|$, since the
corresponding zeroes could  be cancelled by the determinants
of the transfer operators in the numerator weighted
by $\det DF|_{E^u_{max}}$ or even  $\det DF|_{E^u_{min}}$, which
also  have spectral radius strictly larger than $1$. 
\end{remark}

\appendix
\section{Properties of Giulietti--Liverani flows}\label{appA}

Let $h^t$ be a (not necessarily unit speed) Giulietti--Liverani flow of a $C^r$ Anosov diffeomorphism $F$.
If $r>2$, since the stable bundle of $F$ is $C^{1+\alpha}$
for some    $\alpha >0$ (see e.g. \cite{Ha} and references therein), the map $x \mapsto h^t(x)$
is $C^{1+\alpha}$. Clearly, $h^t$ cannot have fixed points or periodic orbits.
It is well known that a periodic orbit-free flow on $\TTT^2$ has a global transversal,
with the corresponding Poincar\'e map  topologically conjugated
to an irrational rigid rotation $R_\omega$, and that the flow is uniquely ergodic  (and thus minimal, using the conjugacy with the linear model).  
By \cite[Lemma 1.1]{GL}, the rotation number $\omega$ of the rotation thus associated to a
 Giulietti--Liverani flow  satisfies\footnote{As pointed out by the Editor, in the first statement of  \cite[Lemma 1.1]{GL}
 	the sentence ``it is topologically conjugated to a rigid rotation with rotation number $\omega$ such that...'' should be
 	replaced by ``it is topologically orbit equivalent with a flow whose
 	 Poincar\'e map on a global transversal has rotation number $\omega$ such that...''}
\begin{equation}\label{foro}
b \omega^2 + (a-d) \omega - c=0\, ,
\end{equation}
where $A= \left (\begin{smallmatrix}a & b\\ c & d \end{smallmatrix}\right )\in PSL(2,\integer)$ is the hyperbolic linear
toral automorphism topologically conjugated to $F$.
In particular, $b\ne 0$, and the continued fraction of $\omega$ is periodic and thus of constant type. (The orientation preserving 
assumption also implies that the contracting eigenvalue of $A$  is positive.)

Finally, by \cite[Lemma 1.1]{GL}, if $r \ge 3$, any $C^{r}$ periodic orbit-free flow $h_0^t$ on $\TTT^2$ whose Poincar\'e map $P_0$ has rotation number $\omega$ satisfying \eqref{foro} for some hyperbolic
matrix $A\in \left (\begin{smallmatrix}a & b\\ c & d \end{smallmatrix}\right )\in PSL(2,\integer)$, with positive contracting
eigenvalue, 
is the (non necessarily unit speed) Giulietti--Liverani flow of an Anosov diffeomorphism $F_0$, which is $C^{r'}$ for all $r' <r-1$:
Indeed, 
the $C^r$ circle diffeomorphism $P_0$ has rotation number $\omega$ of constant type and is thus  conjugated with the rigid rotation $R_\omega$ via
a $C^{r'}$ circle diffeomorphism $\phi$ \cite[Th\'eor\`eme fondamental, p. 9]{Herman}, for any $r'<r-1$.
Let $h_1^t$ be the unit speed $C^r$ reparametrisation of $h_0^t$.
Then the (unit speed) linear flow $h_\omega^t$ over $R_\omega$
 is the Giulietti--Liverani flow of the hyperbolic automorphism $A$.
In addition, the flow $h_\omega^t$ is conjugated with $h_1^t$ via a  $C^{r'}$ toral diffeomorphism $\Phi$ which coincides with
$\phi$ on the transversal and maps the stable lines of $h_\omega^t$
to the orbits of $h_i^t$,
$i=0, 1$. Finally, $h_1^t$ is the
(unit speed)  Giulietti--Liverani flow of the
$C^{r'}$ Anosov diffeomorphism $F_0= \Phi \circ A \circ \Phi^{-1}$.
(Use that the stable manifolds of $F_0$ coincide with the orbits of $h_i^t$,
$i=0, 1$.)



\begin{thebibliography}{KKPW}
 
 
 \bibitem{Ad} A. Adam, Horocycle averages on closed manifolds and transfer operators,  arXiv:1809.04062 (2018)
 
 
 \bibitem{Babook} V. Baladi 
{\it Dynamical Zeta Functions and Dynamical Determinants for Hyperbolic Maps,} Ergebnisse der Mathematik und ihrer Grenzgebiete, {\bf 68} Springer, Cham (2018)


 \bibitem{BT}  V. Baladi and M. Tsujii,
 {\it Dynamical determinants and spectrum for hyperbolic diffeomorphisms,}
 in Geometric and Probabilistic Structures in Dynamics, 29--68, Contemp. Math. {\bf 469,} Amer. Math. Soc. (2008)

\bibitem{Ca} J. Carrand, {\it Logarithmic bounds for ergodic averages of constant type rotation
number flows on the torus: a short proof,}
arXiv:2012.07481 (2020)

 
 \bibitem{FF} L. Flaminio and G. Forni, {\it Invariant distributions and time averages for horocycle flows,} Duke Math. J. {\bf 119}  465--526 (2003)
 
 \bibitem{F} G. Forni, {\it On the equidistribution of unstable curves for pseudo-Anosov diffeomorphisms of compact surfaces,} arXiv:2007.03144 

  
 
\bibitem{GL} P. Giulietti and C. Liverani, {\it Parabolic dynamics and anisotropic Banach spaces,}
JEMS  {\bf 21} 2793--2858 (2019)

\bibitem{GoL} S. Gou\"ezel and C. Liverani, {\it Banach spaces adapted to Anosov systems,} Ergodic Theory  Dynam. Systems {\bf 26} 189--217 (2006) 

\bibitem{GL2} S. Gou\"ezel and C. Liverani,
{\it Compact locally maximal hyperbolic sets for smooth maps: fine statistical properties,} J. Diff. Geom.  {\bf 79}  (2008)

\bibitem{KH} A. Katok and B. Hasselblatt,
{\it Introduction to the Modern Theory of Dynamical Systems,}
Encyclopedia  Math. and  Applications {\bf 54}
Cambridge University Press (1995)

\bibitem{Ha} B. Hasselblatt, {\it Regularity of the Anosov splitting and of horospheric foliations,} Ergodic Theory Dynam. Systems {\bf 14} 645--666 (1994)

\bibitem{Herman}  M.-R. Herman, {\it Sur la conjugaison diff\'erentiable des diff\'eomorphismes du cercle \`a des rotations,} Inst. Hautes \'Etudes Sci. Publ. Math. {\bf 49} (1979) 5--233. 


\bibitem{Hiraide} K. Hiraide,
{\it A simple proof of the Franks--Newhouse theorem
on codimension-one Anosov diffeomorphisms,}
Ergod. Th.  Dynam. Sys. {\bf 21} 801--806 (2001)

\bibitem{GH} R. McCutcheon, 
{\it The Gottschalk-Hedlund theorem,}
Amer. Math. Monthly {\bf 106}  670--672 (1999)

\end{thebibliography}
\end{document}